\documentclass[a4paper,twoside,final,reqno]{amsart}
\usepackage[utf8]{inputenc}
\usepackage[T1]{fontenc}
\usepackage[english]{babel}
\usepackage{etex,fixltx2e}
\usepackage{braket}
\usepackage{lmodern}
\usepackage{amsmath,amssymb,ifthen}
\usepackage{mathtools}

\newcommand{\bbA}{{\mathbb A}}
\newcommand{\bbC}{{\mathbb C}}
\newcommand{\bbN}{{\mathbb N}}
\newcommand{\bbQ}{{\mathbb Q}}
\newcommand{\bbR}{{\mathbb R}}
\newcommand{\bbZ}{{\mathbb Z}}
\newcommand{\calO}{{\mathcal O}}
\newcommand{\frakd}{{\mathfrak d}}
\newcommand{\frakm}{{\mathfrak m}}
\newcommand{\frakp}{{\mathfrak p}}
\newcommand{\frakI}{{\mathfrak I}}
\newcommand{\frakM}{{\mathfrak M}}
\newcommand{\frakR}{{\mathfrak R}}

\newcommand{\Ima}{{\ensuremath{\operatorname{\frakI}}}}
\newcommand{\Rea}{{\ensuremath{\operatorname{\frakR}}}}
\newcommand{\Tr}{{\ensuremath{\operatorname{Tr}}}}
\newcommand{\isom}{\mathrel{\cong}}
\newcommand{\GL}{{\ensuremath{\operatorname{GL}}}}
\DeclarePairedDelimiter\abs{\lvert}{\rvert}

\newcommand{\im}{{\mathrm i}}
\newcommand{\tsum}{{\textstyle\sum}}

\newcommand{\compmod}{\prescript{\star}{}{\calO}}

\theoremstyle{plain}
\newtheorem{thm}{Theorem}[section]
\newtheorem{lem}[thm]{Lemma}
\newtheorem{cor}[thm]{Corollary}
\theoremstyle{definition}
\newtheorem{rem}[thm]{Remark}
\newtheorem{defn}[thm]{Definition}
\newtheorem{exmp}[thm]{Example}

\begin{document}

\title{Adelic Geometry and Polarity}
\author{Carsten Thiel}
\address{Fakult\"at f\"ur Mathematik, Otto-von-Guericke
Universit\"at Mag\-deburg, Universit\"atsplatz 2, D-39106 Magdeburg}
\email{carsten.thiel@ovgu.de}

\subjclass[2010]{11H06 (11R56, 52C07)}

\keywords{Adelic geometry, successive minima, polarity}

\begin{abstract}
In the present paper we generalise transference theorems 
from the classical geometry of numbers to
the geometry of numbers over the ring of adeles of a number field.
To this end we introduce a notion of polarity for adelic convex bodies.
\end{abstract}

\maketitle

\section{Introduction}

By a convex body $S$ in the $m$-dimensional Euclidean space $\bbR^m$, we mean a compact and convex set $S\subset\bbR^m$
with non-empty interior, which we assume to be $0$-symmetric, i.e.\ $S=-S$.
A lattice $\Lambda$ is a discrete $\bbZ$-submodule of $\bbR^m$ of full rank.

Given a convex body $S$ and a lattice $\Lambda$ in $\bbR^m$,
the $i$-th successive minimum $\lambda_i(S,\Lambda)$ for $1\leq i\leq m$
of $S$ with respect to $\Lambda$ is defined as
\[\lambda_i(S,\Lambda)\coloneqq\inf\Set{\lambda>0 | \lambda S\cap\Lambda \text{ contains at least }i\text{ linearly independent elements}}\,.\]

With a convex body $S$ and a lattice $\Lambda$ we can associate the polar body
\[S^\star\coloneqq\Set{x\in\bbR^m | \left<x,y\right> \leq 1\ \forall\,y\in S}\]
and the polar lattice
\[\Lambda^\star\coloneqq\Set{x\in\bbR^m | \left<x,y\right> \in\bbZ\ \forall\,y\in \Lambda}\,,\]
where $\left<\,\cdot\,,\,\cdot\,\right>$ denotes the standard scalar product on $\bbR^m$.
We have $(\bbZ^m)^\star=\bbZ^m$ and $B_m^\star=B_m^{\phantom{\star}}$ for the Euclidean unit ball.

A classical inequality, first investigated by Mahler, is the transference result
\begin{equation}\label{eq:classicalinequality}
1\leq \lambda_i(S,\Lambda) \lambda_{m-i+1}(S^\star,\Lambda^\star) \leq m^{3/2}\,,
\end{equation}
for $1\leq i\leq m$. For the lower bound see Gruber \cite[\textsection\,5]{MR1242995},
while the upper bound follows from Banaszczyk \cite[Thm.~2.1]{banaszcyknewbounds}.

\medskip

We provide a generalisation of this inequality and of the notion of polarity
to the geometry of numbers 
over the ring of adeles of an algebraic number field.

The theory of adelic geometry of numbers arises in the context of Siegel's Lemma,
which asks for a small integral solution to a system of linear equations with integer coefficients.
Answers by Thue, Siegel and others usually involve counting arguments or Minkowski's theorems on successive minima, cf \cite{schmidt.lnm1467}.
In order to allow coefficients and solutions from an algebraic number field,
Bombieri and Vaaler in \cite{bombierivaalersiegelslemma} proved
an adelic variant of Minkowski's second theorem on successive minima.
A comprehensive overview of adelic geometry of numbers can be found in \cite{thunderremarksonadelic}.

The theory has been further generalised, as has Siegel's Lemma, with recent results by 
Fukshansky \cite{fukshanskysiegelslemma}, \cite{fukshanskyalgebraicpoints} and Gaudron \cite{gaudron}
on the number of algebraic points in bounded regions.
Further work on Siegel's Lemma for the algebraic closure of $\bbQ$
by Roy and Thunder~\cite{roythunderabsolutesiegel} involves the study of twisted heights.
Using these heights they introduce a different notion of adelic polarity 
and an analogous statement of (\ref{eq:classicalinequality}) in terms of these heights, 
which has most recently been extended by Rothlisberger~\cite{rothlisberger}.

The present paper however uses a more geometric approach, 
directly extending the classical notion of polarity to the adelic setting.

To this end we fix an algebraic number field $K$ of degree $d$ over $\bbQ$,
with field discriminant $\Delta_K$, cf.~\cite[Kap.\,I]{neukirch}.
We will use geometry of numbers over the ring of adeles $K_\bbA$ of $K$ of rank $n\in\bbN$.

The definitions of an adelic convex body $S$
and the adelic successive minima $\lambda_i(S)$ for $1\leq i,j \leq n$, 
will be provided in Section~\ref{sec:adelicgeometryofnumbers}.
For our definition of polar adelic body see Definition~\ref{def:adelicpolarbody}.

The main results of this paper are the following.

\begin{thm}\label{thm:adelicpolarupper}
Let $S$ be an adelic convex body, $S^\star$ its polar and let $\lambda_i(S)$, 
$\lambda_j(S^\star)$ ($1\leq i,j\leq n$) be the successive minima of $S$ and $S^\star$ respectively.
Then for $1\leq\ell\leq n$
\[\lambda_\ell(S)\lambda_{n-\ell+1}(S^\star)\leq (nd)^{3/2} \,.\]
\end{thm}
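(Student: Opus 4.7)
The strategy is to deduce the theorem from the classical transference bound (\ref{eq:classicalinequality}) by reducing the adelic geometry of numbers over $K^n$ to classical geometry of numbers in $\bbR^{nd}$. Using the archimedean embeddings to identify $K\otimes_\bbQ \bbR \cong \bbR^d$, an adelic convex body $S$ in $K_\bbA^n$ should induce a $0$-symmetric convex body $\tilde S \subset \bbR^{nd}$, while the $\calO$-module structure arising at the non-archimedean places produces a full-rank lattice $\Lambda\subset \bbR^{nd}$ (essentially the image of $\calO^n$). The first task is to verify that the adelic polar body and the adelic polar lattice of Definition~\ref{def:adelicpolarbody} correspond, under this identification, to the classical Euclidean polar body and polar lattice in $\bbR^{nd}$, i.e.\ $\widetilde{S^\star} = (\tilde S)^\star$ and $\widetilde{\Lambda^\star} = \Lambda^\star$.

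I expect this compatibility to be the main obstacle: it requires matching the adelic pairing (built from $\Tr_{K/\bbQ}$ at the infinite places) with the standard Euclidean inner product on $\bbR^{nd}$, and using that the trace-dual of $\calO$ is the inverse different $\frakd^{-1}$, so that the $|\Delta_K|$-factors cancel between $\Lambda$ and $\Lambda^\star$ in the product of minima. The bridge between adelic and classical successive minima is then a dimension count over $K$: a set of $k$ $K$-linearly independent vectors in $\lambda S\cap K^n$ yields $kd$ $\bbQ$-linearly independent vectors in $\lambda\tilde S\cap\Lambda$, while, conversely, any $(k-1)d+1$ $\bbQ$-linearly independent vectors in $K^n$ cannot fit inside a $K$-subspace of $K$-dimension at most $k-1$ (whose $\bbQ$-dimension is only $(k-1)d$), so must contain $k$ vectors that are $K$-linearly independent. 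This yields the comparison
\[
\lambda_k(S) \le \lambda_{(k-1)d+1}(\tilde S,\Lambda), \qquad 1\le k \le n,
\]
and analogously for $S^\star$ and $\Lambda^\star$.

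With these ingredients in place, the rest is a routine combination. Apply the upper bound in (\ref{eq:classicalinequality}) in dimension $nd$ with $i = (\ell-1)d + 1$, so that $nd - i + 1 = (n - \ell + 1)d$. Using the comparison lemma twice together with the monotonicity $\lambda_{(n-\ell)d+1}(\tilde S^\star,\Lambda^\star) \le \lambda_{(n-\ell+1)d}(\tilde S^\star,\Lambda^\star)$ of classical successive minima, one obtains
\[
\lambda_\ell(S)\,\lambda_{n-\ell+1}(S^\star) \le \lambda_{(\ell-1)d+1}(\tilde S,\Lambda)\,\lambda_{(n-\ell+1)d}(\tilde S^\star,\Lambda^\star) \le (nd)^{3/2},
\]
which is the asserted inequality.
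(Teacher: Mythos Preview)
Your overall strategy coincides with the paper's: pass to $\bbR^{nd}$ via $\rho\circ\iota$, use the index-shift comparison $\lambda_k(S)\le\widehat\lambda_{(k-1)d+1}(\rho(S_\infty),\rho(\iota(\frakM)))$ (the paper quotes this from \cite{thunderremarksonadelic}), and then apply the classical bound (\ref{eq:classicalinequality}) in dimension $nd$ with $i=(\ell-1)d+1$. Your final chain of inequalities is exactly what the paper writes.

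There is, however, one genuine error in your plan. You set out to prove the equality $\widetilde{S^\star}=(\tilde S)^\star$, and this is false. By Definition~\ref{def:adelicpolarbody} the archimedean part of $S^\star$ is the \emph{product} $\prod_{v\mid\infty}S_v^\star$, whereas the Euclidean polar of a product $\prod_v S_v$ is in general strictly smaller than the product of the polars (e.g.\ the polar of $[-1,1]^2$ is the $\ell^1$-ball, not $[-1,1]^2$). What does hold, and what the paper actually uses (see (\ref{eq:prodofkomplcontainscomplofprod}) and (\ref{eq:dualindual})), is the one-sided inclusion
\[
(\rho(S_\infty))^\star\subseteq\rho(S_\infty^\star).
\]
Fortunately this inclusion points the right way: it yields $\widehat\lambda_i\bigl(\rho(S_\infty^\star),\Lambda^\star\bigr)\le\widehat\lambda_i\bigl((\rho(S_\infty))^\star,\Lambda^\star\bigr)$, so after bounding $\lambda_{n-\ell+1}(S^\star)$ by a classical minimum of $\rho(S_\infty^\star)$ you may pass to the genuine Euclidean polar $(\rho(S_\infty))^\star$ and then invoke (\ref{eq:classicalinequality}). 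Your final display becomes correct once you read $\tilde S^\star$ as $(\tilde S)^\star$ and insert this inclusion as an additional inequality rather than an identity.

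Two minor remarks. First, there is no ``$|\Delta_K|$-factor to cancel'' here: the inverse different is already absorbed into the definition of the polar lattice via $\compmod$, and Corollary~\ref{cor:scalarproductindimnd} shows directly that $\rho(\iota(\frakM))$ and $\rho(\overline{\iota}(\frakM^\star))$ are mutually polar, with no rescaling. The discriminant only resurfaces in the lower bound of Theorem~\ref{thm:adelicpolarlower}. Second, the lattice is not literally the image of $\calO^n$ but of $\frakM=\bigcap_{v\nmid\infty}(A_v^{-1}\calO_v^n\cap K^n)$, which carries the non-archimedean data of $S$; and note that the dual lattice is embedded via $\overline{\iota}$ rather than $\iota$.
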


In view of the classical result (\ref{eq:classicalinequality}) we are also interested in a lower bound,
which however we can not proof in full generality.
For a special class of adelic convex body and for $K$ totally real or a CM-field
(i.e.\ a field of complex multiplication) we get the following estimate.

\begin{thm}\label{thm:adelicpolarlower}
Let $K$ be totally real or a CM-field and
let $S$ be an adelic convex body, with the additional requirement that 
for all complex places $v$, we have $S_v=\alpha S_v$ for $\alpha\in\bbC$ with $\abs{\alpha}=1$.
Let $S^\star$ be its polar and let $\lambda_i(S)$, 
$\lambda_j(S^\star)$ ($1\leq i,j\leq n$) be the successive minima of $S$ and $S^\star$ respectively,

Then for $1\leq\ell\leq n$
\[\tfrac{1}{\sqrt[d]{\abs{\Delta_K}}}\leq \lambda_\ell(S)\lambda_{n-\ell+1}(S^\star)\,.\]
\end{thm}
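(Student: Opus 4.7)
The plan is to generalise the classical proof of the lower bound in \eqref{eq:classicalinequality}. That argument pairs $\ell$ minimum-achieving vectors for $S$ against $n-\ell+1$ minimum-achieving vectors for $S^\star$, extracts by a dimension count one pair with nonzero pairing, and concludes because the pairing is a nonzero integer whose absolute value is bounded by $\lambda_\ell\lambda_{m-\ell+1}$. In the adelic setting the pairings are elements of $K$ rather than $\bbZ$; the relevant integrality constraint places them not in $\calO_K$ but in the inverse different $\frakd^{-1}$, whose nonzero elements satisfy only $\abs{\mathrm{N}_{K/\bbQ}(\alpha)}\geq \abs{\Delta_K}^{-1}$. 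This accounts for the factor $\sqrt[d]{\abs{\Delta_K}}^{-1}$ in the statement.

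Concretely I would pick $K$-linearly independent $\xi^{(1)},\dots,\xi^{(\ell)}\in K^n$ attaining $\lambda_\ell(S)$, and $K$-linearly independent $\eta^{(1)},\dots,\eta^{(n-\ell+1)}\in K^n$ attaining $\lambda_{n-\ell+1}(S^\star)$. Since the standard bilinear form on $K^n$ is non-degenerate, the annihilator of the $K$-span of the $\eta^{(j)}$ has $K$-dimension $\ell-1$ and cannot contain all $\ell$ vectors $\xi^{(i)}$; thus for some indices $i,j$ the element $\alpha\coloneqq\langle\xi^{(i)},\eta^{(j)}\rangle$ lies in $K^\times$. I then need two estimates: (a) at every archimedean place $v$, $\abs{\sigma_v(\alpha)}\leq\lambda_\ell(S)\,\lambda_{n-\ell+1}(S^\star)$, and (b) at every finite place, the polar-body conditions from Definition~\ref{def:adelicpolarbody} together with the trace-form duality $\calO_K^{\vee}=\frakd^{-1}$ force $\alpha\in\frakd^{-1}$. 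Granted both, the product formula $\abs{\mathrm{N}_{K/\bbQ}(\alpha)}=\prod_{v\mid\infty}\abs{\sigma_v(\alpha)}^{d_v}$ (with $d_v=[K_v:\bbR]$ and $\sum_{v\mid\infty}d_v=d$) combined with $\abs{\mathrm{N}_{K/\bbQ}(\alpha)}\geq\abs{\Delta_K}^{-1}$ immediately yields $(\lambda_\ell(S)\,\lambda_{n-\ell+1}(S^\star))^d\geq\abs{\Delta_K}^{-1}$, which is the claim.

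Part (b) is the standard arithmetic dictionary and should follow unpacking the definition of the adelic polar. At real places, (a) is immediate from the definition of the polar in $\bbR^n$. The main obstacle is (a) at complex places: the natural expansion $\sigma_v(\alpha)=\sum_k \sigma_v(\xi_k^{(i)})\,\sigma_v(\eta_k^{(j)})$ is a complex-bilinear pairing, whereas the polar body $S_v^\star\subset\bbC^n$ is most naturally controlled by the Hermitian inner product. This is exactly where the hypotheses of the theorem intervene: in a CM field the field-level involution supplies a well-defined conjugate $\overline{\eta^{(j)}}\in K^n$ with $\sigma_v(\overline{\eta^{(j)}})=\overline{\sigma_v(\eta^{(j)})}$, and the circularity assumption $S_v=\alpha S_v$ for $\abs{\alpha}=1$ makes $S_v$ and $S_v^\star$ stable enough under the relevant phase symmetries to equate the bilinear and Hermitian polar bounds up to absolute values. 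For totally real $K$ no complex place arises. Carrying out this passage between pairings while keeping the polar constraint intact is the technical crux of the argument; once completed, the rest is the arithmetic assembly described above.
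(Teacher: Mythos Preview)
Your plan is exactly the paper's approach: pair minimum vectors to obtain a nonzero element of $K$, show it lies in $\frakd^{-1}$ at the finite places from the shape of $S_v^\star$, bound it by $\lambda_\ell\lambda_{n-\ell+1}$ at each archimedean place via the polar condition, and conclude by the product formula. The paper resolves your ``technical crux'' at complex places precisely as you anticipate---it builds the CM involution into the pairing from the start, setting $b(x,y)=\sum_k x_k\overline{y_k}$ so that $\sigma_v(b(\xi,\eta))=\sum_k\sigma_v(\xi_k)\overline{\sigma_v(\eta_k)}$ is Hermitian, and then bounds the real and imaginary parts separately by $\tfrac12$ using, respectively, the polar inequality for the form \eqref{eq:skalprodaufKinfinity} and the circularity hypothesis $\im\,S_v=S_v$.
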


Notice, that in the case $K=\bbQ$ these results reduce to the classical statement (\ref{eq:classicalinequality}).
Finally, Example~\ref{exmp:bbQsqrt2} shows that the lower bound is sharp, at least for $n=1$.

\section{Adelic Geometry of Numbers}\label{sec:adelicgeometryofnumbers}

We start by giving a brief overview of the ring of adeles of an
algebraic number field $K$ of degree $d$ over $\bbQ$. 
For more details and proofs we refer to \cite[Ch.~IV]{weilbasicnumber} and \cite[Ch.~VI]{knappadvancedalgebra}.
Let $r$ be the number of real and $s$ the number of pairs of complex embeddings 
of $K$ into $\bbC$. Then $d=r+2s$. Denote by 
$\calO$ the ring of algebraic integers of $K$ and by $\Delta_K$ its field discriminant.

Let $M(K)$ be its set of places.
For $v\in M(K)$ we write $v\nmid\infty$ for non-archimedian places
and  $v\mid\infty$ for the archimedian ones.
For the corresponding absolute value on $K$ we write $\abs{\,\cdot\,}_v$.
We normalize it to extend either the usual absolute value on $\bbQ$ for archimedian places 
or the $p$-adic absolute value for a prime $p$.
Then the local field $K_v$ is the completion of $K$ with respect to $v$.
For $v\nmid\infty$ let $\calO_v$ be the local ring of integers.

Let $K_\bbA$ be the ring of adeles of $K$ and $K_\bbA^n$ the standard module of rank $n\geq 2$,
i.e.\ the $n$-fold product of adeles.
Recall that $K_\bbA$ is the restricted direct product of the $K_v$ with respect to the $\calO_v$.
For any $v\in M(K)$ let $d_v=[K_v:\bbQ_v]$ be the local degree ($\bbQ_\infty\isom\bbR$).
Then for all primes $p\in\bbZ$
\begin{equation}\label{eq:productformulaetc}
d=\sum_{v\mid p} d_v\quad \text{and}\quad d=\sum_{v\mid \infty} d_v\,,\quad\text{and also}\quad
\prod_{v\in M(K)} \abs{a}_v^{d_v}=1
\end{equation}
for all non-zero $a\in K$.

Denote by $\sigma_i$, $1\leq i \leq r$ the embeddings of $K$ into $\bbR$
and by $\sigma_{r+i}=\overline{\sigma}_{r+i+s}$, $1\leq i\leq s$ the pairs 
of embeddings of $K$ into $\bbC$, so $d=r+2s$.
We call $K$ \emph{totally real}, if $s=0$,
and we call $K$ a \emph{CM-field}, if it is a quadratic extension of a totally real field with $r=0$.
Then
\begin{align*}
\iota &\colon x\mapsto\bigl(\sigma_1(x),\ldots,\sigma_r(x),
\sigma_{r+1}(x),\ldots,\sigma_{r+s}(x)\bigr)\\
\shortintertext{and}
\overline{\iota} &\colon x\mapsto\bigl(\sigma_1(x),\ldots,\sigma_r(x),
\overline{\sigma}_{r+1}(x),\ldots,\overline{\sigma}_{r+s}(x)\bigr)
\end{align*}
are embeddings of $K$ into $K_\infty\coloneqq\prod_{v\mid\infty}K_v$.

There is a canonical isomorphism $\rho\colon K_\infty\rightarrow \bbR^{d}$ with
\[\begin{multlined}
\rho\bigl(x_1,\ldots,x_r,x_{r+1},\ldots,x_{r+s}\bigr)= \\
\qquad\bigl(x_1,\ldots,x_r,\Rea(x_{r+1}),\Ima(x_{r+1}),\ldots,\Rea(x_{r+s}),\Ima(x_{r+s})\bigr)\,.
\end{multlined}\]
Here $\Rea$ and $\Ima$ denote real and imaginary parts respectively.

Together we get $(\rho\circ\iota)\colon K\hookrightarrow \bbR^d$,
\[x\mapsto\bigl(\sigma_1(x),\ldots,\sigma_r(x),
\Rea(\sigma_{r+1}(x)),\Ima(\sigma_{r+1}(x)),\ldots,\Rea(\sigma_{r+s}(x)),\Ima(\sigma_{r+s}(x))\bigr)\,.\]

In the rank-$n$-case let $K_\infty^n\coloneqq\prod_{v\mid\infty}K_v^n$,
\[
\iota^n \coloneqq (\sigma_1^n,\ldots,\sigma_r^n,\sigma_{r+1}^n,\ldots,\sigma_{r+s}^n) 
\colon K^n\rightarrow K_\infty^n\,,
\quad\text{$\overline{\iota}^n$ respectively,}\]
where the $\sigma_i$ act componentwise.
Similarily $\rho^n\colon K_\infty^n\rightarrow\bbR^{nd}$.

\begin{defn}\label{def:adelicconvexbody}
For each $v\nmid\infty$ let $S_v$ be a free $\calO_v$-module of full rank,
where $S_v=\calO_v^n$ for all but finitely many $v$.
In other words, for any $v\nmid\infty$ there is an $A_v\in\GL_n(K_v)$ such that
$S_v=A_v^{-1}\calO_v^n$, where $A_v$ is the identity for all but finitely many $v$.
For $v\mid\infty$ we have $K_v\isom\bbR$ or $K_v\isom\bbC$. 
In this case let $S_v$ be a $0$-symmetric compact convex body with 
non-empty interior in $\bbR^n$ or $\bbC^n\isom\bbR^{2n}$ respectively.
Then the set
\[S = \prod_{v\nmid\infty} S_v \times \prod_{v\mid\infty} S_v\]
is called a closed symmetric \emph{adelic convex body}. If necessary, we denote $S_\infty=\prod_{v\mid\infty} S_v$.
\end{defn}

For $(x_v)_v\in K_\bbA^n$ we define the scalar multiple $(y_v)_v=\lambda(x_v)_v$ for $\lambda\in\bbR^+$ by
\[
y_v\coloneqq\begin{cases}\phantom{\lambda}x_v &\text{if } v\nmid\infty\,,\\
\lambda x_v &\text{if } v\mid\infty\,.\end{cases}
\]

\begin{defn}\label{def:adelsuccmindilat}
The \emph{$i$-th successive minimum} of the adelic convex body $S$ is
\[\lambda_i(S)=\inf\{\lambda>0 \mid \exists\, x_1,\ldots,x_i\in K^n\text{ lin.\ indep.\ over }K
\text{ s.t. } x_j\in\lambda S \text{ for all }j\}\]
for $1\leq i\leq n$. By construction $\lambda_i (S) \leq \lambda_j(S)$ for $i\leq j$.
\end{defn}

\begin{defn}\label{def:adelinhommin}
The \emph{inhomogeneous minimum} of the adelic convex body $S$ is
\[\mu(S)\coloneqq\inf\Bigl\{\mu>0 \Bigm| K_\bbA^n \subseteq \bigcup_{\zeta\in K^n} \bigl(\mu S+\zeta\bigr)\Bigr\}\,.\]
By construction $\mu(S) = \widehat{\mu}(\rho(S_\infty),\rho(\iota(\frakM)))$,
where 
\[\widehat{\mu}(T,\Lambda)\coloneqq\inf\Bigl\{\mu>0 \Bigm| \bbR^m\subseteq \bigcup_{\zeta\in \Lambda} \bigl(\mu T+\zeta\bigr) \Bigr\}\]
is the classical inhomogeneous minimum of the convex body $T\subset\bbR^m$ with respect to the lattice $\Lambda\subset\bbR^m$, cf.\ \cite[\textsection\,5]{MR1242995}.
Here $\frakM=\bigcap_{v\nmid\infty} \bigl(S_v \cap K^n\bigr)$.
\end{defn}

\section{Adelic Polarity}

In order to define our notion of adelic polarity we first recall some
background from Algebraic Number Theory.
It is well-known~\cite[Ch.\,I,(2.8)]{neukirch}, that 
\[T(x,y)\coloneqq\Tr_{K/\bbQ}(xy)\]
is a non-degenerate symmetric $\bbQ$-bilinear form on $K$.
Here $\Tr_{K/\bbQ}$ denotes the field trace.
This allows to define
\begin{equation}
\compmod\coloneqq\Set{x\in K | \Tr_{K/\bbQ}(xy)\in\bbZ\ \forall\,y\in\calO}\,,
\label{eq:defcompmod}
\end{equation}
the \emph{complementary module}, cf.~\cite[Ch.\,III,\,\textsection\,2]{neukirch}.
This is a fraction ideal in $K$, its inverse is the \emph{different} $\frakd$.
On $K^n$ we get a bilinear form given by 
\[T_n( x,y)\coloneqq\sum_{i=1}^n \Tr_{K/\bbQ}(x_i y_i)\,.\]

\bigskip

By \cite[V\,§\,2,\,Thms.\,2\,\&\,3]{weilbasicnumber} for any fractional ideal $\frakm$ 
there is a map $a\colon M(K)\rightarrow\bbZ$, such that $\frakm$ can be written as
\begin{equation}\label{eq:idealasintersection}
\frakm=\bigcap_{v\nmid\infty}(K\cap \frakp_v^{a(v)})\,,
\end{equation}
where almost all $a(v)=0$ and $\frakp_v$ is the unique maximal ideal in $\calO_v$.
More concretely, we get the following special case.

\begin{lem}\label{lem:caloastalsschnitt}
Let $v\nmid\infty$ and define as in the global case
\[\compmod_v\coloneqq\Set{x\in K_v | \Tr_{K_v/\bbQ_v}(xy)\in\bbZ_v\ \forall\,y\in\calO_v}\,.\]
Then $\compmod=\bigcap_{v\nmid\infty}(\compmod_v\cap K)$.
For all but finitely many $v\nmid\infty$ we have $\compmod_v=\calO_v$.
\end{lem}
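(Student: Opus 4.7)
The strategy is to identify the local object $\compmod_v$ explicitly as a fractional ideal of $\calO_v$, identify the global $\compmod$ as a fractional ideal of $\calO$, and then match them up by means of the intersection formula (\ref{eq:idealasintersection}).

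Concretely, I would first note that $\compmod_v$ is the local complementary module of the trace form $(x,y)\mapsto\Tr_{K_v/\bbQ_v}(xy)$ on $K_v/\bbQ_v$, hence (by the local version of the discussion preceding this lemma) is a fractional $\calO_v$-ideal; call it $\frakp_v^{-e(v)}$ for a unique $e(v)\in\bbZ_{\geq 0}$. The exponent $e(v)$ is the $v$-adic valuation of the local different $\frakd_v$. The key classical input I would cite (for example from \cite[Ch.\,III,\,\textsection\,2]{neukirch}) is the compatibility between the global and the local different: the exponent $v(\frakd)$ of the global different $\frakd$ at $v$ is exactly $e(v)$, and $e(v)=0$ whenever $K_v/\bbQ_v$ is unramified.

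Given these identifications, both assertions fall out immediately. For the first, since $\compmod=\frakd^{-1}$, the formula (\ref{eq:idealasintersection}) applied to this fractional ideal yields
\[
\compmod=\bigcap_{v\nmid\infty}\bigl(K\cap\frakp_v^{-e(v)}\bigr)=\bigcap_{v\nmid\infty}\bigl(\compmod_v\cap K\bigr),
\]
where the second equality holds because both $\frakp_v^{-e(v)}$ and $\compmod_v$ cut out the same subset of $K$, namely the elements of $v$-adic valuation at least $-e(v)$. For the second, $\compmod_v=\calO_v$ is equivalent to $e(v)=0$, i.e.\ to $K_v/\bbQ_v$ being unramified, which fails only at the finitely many places lying over the rational primes dividing $\Delta_K$. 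The only genuine step in a fully detailed write-up is the local-global compatibility of the different invoked above; everything else is bookkeeping. An alternative, more hands-on path would avoid local differents entirely by proving the two inclusions directly via the sum-of-local-traces formula $\Tr_{K/\bbQ}(z)=\sum_{w\mid p}\Tr_{K_w/\bbQ_p}(z)$ together with a Chinese-Remainder approximation of $y\in\calO_v$ by elements of $\calO$, but this route is noticeably more technical.
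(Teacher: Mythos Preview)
Your proposal is correct, but the paper takes a different and more elementary route. Rather than invoking the local--global compatibility of the different, the paper first identifies $\compmod_v\cap K$ directly with the localization $\compmod_{(v)}=\{a/b : a\in\compmod,\ b\in\calO\setminus(v)\}$ (citing \cite[p.\,377\,($\star$)]{knappadvancedalgebra}), which immediately gives the inclusion $\compmod\subseteq\bigcap_v(\compmod_v\cap K)$. For the reverse inclusion it uses a short ``ideal of denominators'' argument: given $x$ in the intersection, the ideal $I=\{b\in\calO : bx\in\compmod\}$ is not contained in any maximal ideal $(v)$ (since $x\in\compmod_{(v)}$ furnishes a denominator outside $(v)$), hence $I=\calO$ and $x\in\compmod$. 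This is really just the general fact that a fractional ideal over a Dedekind domain equals the intersection of its localizations, proved from scratch. Your approach is cleaner conceptually and makes the connection to the different explicit, at the cost of citing a nontrivial compatibility theorem; the paper's version is self-contained and avoids that citation, but obscures the role of the different. The treatment of the second assertion (unramifiedness) is essentially the same in both.
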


\begin{proof}
By their definitions (cf.~\cite[p.\,377\,($\star$)]{knappadvancedalgebra}) we have
\[ \compmod_v \cap K = \compmod_{(v)} \coloneqq 
\Set{ \tfrac{a}{b} | a \in \compmod, b \in \calO \setminus (v)}\supseteq\compmod,\]
where $\compmod_{(v)}$ is the localisation of $\compmod$ at the ideal $(v)$ corresponding to $v$.

For the converse inclusion we follow an idea by J.\,Jahnel\footnote{personal communication}.
Let $M\coloneqq \bigcap_{v\nmid\infty} \compmod_{(v)} $, $x\in M$ and
consider the “ideal of denominators”
\[I\coloneqq\Set{ b\in \calO | bx \in M}\,.\]
Since $x\in K\cap\compmod_v=\compmod_{(v)}$, we have $I\not\subset(v)$, 
for the ideal in $K$ corresponding to $v$.
Since this holds for all $v$, we have $I=\calO$. Therefore $x\in\compmod$.

The final statement follows from \cite[Lemma~6.48]{knappadvancedalgebra},
since only finitely many primes are ramified in $K$.
\end{proof}

We extend the construction from \eqref{eq:defcompmod} in a natural way to the rank-$n$ case with the form $T_n$.

\begin{lem}\label{lem:dimensionn}
Let $A\in\GL_n(K)$ and $A_v\in\GL_n(K_v)$ for any finite $v$.
Then
\[\prescript{\star}{}{(A\calO^n)} = A^{-t} (\compmod)^n\quad\text{and}\quad
\prescript{\star}{}{(A_v^{\phantom{x}}\calO_v^n)} = A_v^{-t} (\compmod_v)^n\,.\]
\end{lem}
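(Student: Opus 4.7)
The plan is to reduce both assertions to the case $A=I$ (respectively $A_v=I$) by exploiting the fact that $A^t$ will be the adjoint of $A$ with respect to the trace pairing $T_n$. Once that reduction is in place, the identity $\prescript{\star}{}{\calO^n}=(\compmod)^n$ should be essentially a componentwise restatement of the defining property \eqref{eq:defcompmod} of $\compmod$, and I expect the local version to follow analogously, replacing $\Tr_{K/\bbQ}$ and $\compmod$ by their local counterparts from Lemma~\ref{lem:caloastalsschnitt}.

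First I would establish the adjoint identity $T_n(x,Ay)=T_n(A^tx,y)$ for $x,y\in K^n$ and $A\in\GL_n(K)$ by a direct swap of summation indices:
\[
T_n(x,Ay)=\sum_{i=1}^n\Tr_{K/\bbQ}\!\Bigl(x_i\sum_{j=1}^n A_{ij}y_j\Bigr)=\sum_{j=1}^n\Tr_{K/\bbQ}\!\Bigl(\sum_{i=1}^n A_{ij}x_i\cdot y_j\Bigr)=T_n(A^tx,y).
\]
Consequently $x\in\prescript{\star}{}{(A\calO^n)}$ is equivalent to $T_n(A^tx,y)\in\bbZ$ for all $y\in\calO^n$, i.e.\ to $A^tx\in\prescript{\star}{}{\calO^n}$. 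To identify the latter set with $(\compmod)^n$, I would set $w=A^tx$ and test against the vectors in $\calO^n$ having an arbitrary $\alpha\in\calO$ in a single slot and zero elsewhere; this forces $\Tr_{K/\bbQ}(w_j\alpha)\in\bbZ$ for every $j$ and every $\alpha\in\calO$, so $w_j\in\compmod$. The reverse inclusion is immediate by linearity, and substituting back yields $x\in A^{-t}(\compmod)^n$.

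The local assertion should follow from verbatim the same calculation with $\Tr_{K/\bbQ}$, $\bbZ$ and $\compmod$ replaced by $\Tr_{K_v/\bbQ_v}$, $\bbZ_v$ and $\compmod_v$, using that $\calO_v^n$ contains all scalar multiples of the standard basis vectors by elements of $\calO_v$. I do not foresee any substantive obstacle: the argument is a routine index manipulation, and the only point requiring some care is checking that the correct matrix in the final answer is the inverse transpose $A^{-t}$ (not $A^{-1}$), which is best verified by running through the transposition symbolically rather than guessing from the shape of the statement.
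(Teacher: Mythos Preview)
Your proof is correct and follows essentially the same approach as the paper: the adjoint identity $T_n(x,Ay)=T_n(A^tx,y)$ and the identification $\prescript{\star}{}{\calO^n}=(\compmod)^n$ via testing against single-slot vectors are exactly the two ingredients the paper uses, just presented in the opposite order. The local case is handled identically in both.
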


\begin{proof}
Notice, that
\[\prescript{\star}{}{(\calO^n)}\coloneqq\set{x \in K^n | T_n( x,y)\in\bbZ\ \forall\, y \in \calO^n} \supseteq (\compmod)^n\,.\]
Suppose they are not the same, i.e.\ $\exists\,a\in\prescript{\star}{}{(\calO^n)}\setminus (\compmod)^n$.
Then for some $i$: $a_i\not\in\compmod$, so there is some $b_i\in\calO$, 
such that $\Tr_{K/\bbQ}(a_ib_i)\not\in\bbZ$ by definition of $\compmod$.
But then $T_n(a,(0,\ldots,0,b_i,0,\ldots,0))\not\in\bbZ$ giving a contradiction.

Now let $(a_{ij})_{ij}=A\in\GL_n(K)$, $x,y\in K^n$.
Then 
\begin{align*}
T_n(x, Ay)&=\sum_i \Tr_{K/\bbQ}(x_i (Ay)_i)
=\sum_i \Tr_{K/\bbQ}\bigl(x_i \bigl({\textstyle\sum_j} a_{ij}y_j\bigr)\bigr)\\
&=\sum_i \sum_j\Tr_{K/\bbQ}\bigl(x_i ( a_{ij}y_j)\bigr)
=\sum_j \sum_i\Tr_{K/\bbQ}\bigl((a_{ij}x_i) y_j\bigr)\\
&=\sum_j \Tr_{K/\bbQ}((A^t x)_j y_j)=T_n( A^t x, y)\,.
\end{align*}
The second statement is obvious, as the above argument
works for $x,y\in K_v^n$ and $A_v\in\GL_n(K_v)$ verbatim using $\Tr_{K_v/\bbQ_v}$.
\end{proof}

\medskip

On the other hand, we can define a scalar product on $\bbR^d=\bbR^{r+2s}$ as
\begin{equation}
(x,y)=\sum_{i=1}^r x_i y_i + 2 \sum_{i=r+1}^{2s} x_iy_i\,,\label{eq:skalprodaufRn}
\end{equation}
cf.~\cite[Ch.\,I,(5.1)]{neukirch}. This gives the scalar product
\begin{equation}\label{eq:skalprodaufKinfinity}
(x,y)\coloneqq(\rho(x),\rho(y))=\sum_{v\text{ real}} x_v y_v + \sum_{v\text{ complex}} (x_v\overline{y}_v +\overline{x}_v y_v)
\end{equation}
on $K_\infty$, cf.~\cite[p.\,222]{neukirch}.

\begin{lem}\label{lem:bilformsareequal}
For all $x,y\in K$
\[ 
\Tr_{K/\bbQ}(x y) = \left( \rho(\iota(x)),\rho(\overline{\iota}(y))\right)\,.
\]
\end{lem}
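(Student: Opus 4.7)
The plan is to expand both sides into sums indexed by the embeddings of $K$ into $\bbC$ and check equality term by term. On the left I would use the standard decomposition of the field trace
\[\Tr_{K/\bbQ}(z) = \sum_{i=1}^{r}\sigma_i(z) + \sum_{i=r+1}^{r+s}\bigl(\sigma_i(z) + \overline{\sigma}_i(z)\bigr),\]
so that $\Tr_{K/\bbQ}(xy)$ becomes the real-place sum $\sum_{i=1}^r\sigma_i(x)\sigma_i(y)$ plus a complex-place contribution of the form $2\sum_{i=r+1}^{r+s}\Rea\bigl(\sigma_i(x)\sigma_i(y)\bigr)$, using that $z+\overline{z} = 2\Rea(z)$.

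On the right I would substitute the explicit coordinate expressions for $\rho(\iota(x))$ and $\rho(\overline{\iota}(y))$ into the weighted scalar product (\ref{eq:skalprodaufRn}). At a real place the contribution is simply $\sigma_i(x)\sigma_i(y)$. At a complex place $i$ the pair of coordinates coming from $\iota(x)$ is $\bigl(\Rea(\sigma_i(x)),\Ima(\sigma_i(x))\bigr)$, while that coming from $\overline{\iota}(y)$ is $\bigl(\Rea(\sigma_i(y)),-\Ima(\sigma_i(y))\bigr)$. Combining these via the factor $2$ from (\ref{eq:skalprodaufRn}) and using the elementary identity $\Rea(a)\Rea(b) - \Ima(a)\Ima(b) = \Rea(ab)$, the complex-place contribution collapses to $2\Rea(\sigma_i(x)\sigma_i(y))$, matching the corresponding term on the trace side.

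There is no real obstacle; the entire argument is a bookkeeping calculation. The only conceptually relevant point is that the complex conjugate appears on the \emph{second} argument through $\overline{\iota}$: if one used $\iota$ on both sides, the computation would instead produce $\Rea(\sigma_i(x)\overline{\sigma_i(y)})$, which is not equal to $\Rea(\sigma_i(xy))$, and the identity with $\Tr_{K/\bbQ}(xy)$ would fail. This is exactly what motivates the asymmetric form of the scalar product (\ref{eq:skalprodaufKinfinity}) at complex places.
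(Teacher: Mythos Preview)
Your proposal is correct and follows essentially the same route as the paper: both expand each side over the real and complex embeddings, reduce the complex-place terms via $\Rea(ab)=\Rea(a)\Rea(b)-\Ima(a)\Ima(b)$, and match. Your closing remark on why $\overline{\iota}$ must appear on the second argument is a useful clarification that the paper does not make explicit.
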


\begin{proof}
Let $x,y\in K$, then
\begin{align*}
&\phantom{=}\left(\rho(\iota(x)),\rho(\overline{\iota}(y))\right)\\
&\phantom{=}=\sum_{j=1}^r \sigma_j(x) \sigma_j(y) 
+ \sum_{j=1}^s 2\bigl(\Rea(\sigma_{r+j}(x))\Rea(\overline{\sigma}_{r+j}(y))+ \Ima(\sigma_{r+j}(x))\Ima(\overline{\sigma}_{r+j}(y))\bigr)\\
&\phantom{=}=\sum_{j=1}^r \sigma_j(x) \sigma_j(y) 
+ 2\sum_{j=1}^s \bigl(\Rea(\sigma_{r+j}(x))\Rea(\sigma_{r+j}(y))- \Ima(\sigma_{r+j}(x))\Ima(\sigma_{r+j}(y))\bigr)\,.
\end{align*}

By~\cite[Ch.\,I,(2.6)\,(ii)]{neukirch}, $\Tr_{K/\bbQ}(x)=\sum_\sigma \sigma(x)$, 
where the sum is over all embeddings $\sigma\colon K\hookrightarrow \overline{\bbQ}$.
As all complex embeddings appear in conjugated pairs
\begin{align*}
\Tr_{K/\bbQ}(xy) &= 
\sum_{j=1}^r \sigma_j(xy) + \sum_{j=1}^s \sigma_{r+j}(xy) + \sum_{j=1}^s \overline{\sigma}_{r+j}(xy) \\
&= \sum_{j=1}^r \sigma_{j}(x)\sigma_{j}(y) + 2\sum_{j=1}^s \Rea(\sigma_{r+j}(x)\sigma_{r+j}(y))\,.
\end{align*}
The statement follows from $\Rea(ab)=\Rea(a)\Rea(b)-\Ima(a)\Ima(b)$.
\end{proof}

\begin{cor}\label{cor:polariscompl}
For any algebraic number field $K$ with ring of integers $\calO$ and embeddings $\rho$ and $\iota$ as above, we have
\[\rho(\iota(\calO))^\star=\rho(\overline{\iota}(\compmod))\,,\]
where $(\,\cdot\,)^\star$ is the polar with respect to the form in (\ref{eq:skalprodaufRn}).
\end{cor}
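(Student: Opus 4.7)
The plan is to prove the two inclusions of the claimed equality separately, using Lemma~\ref{lem:bilformsareequal} as the bridge between the trace pairing on $K$ and the real form~(\ref{eq:skalprodaufRn}) on $\bbR^d$. The forward inclusion $\rho(\overline{\iota}(\compmod))\subseteq\rho(\iota(\calO))^\star$ is essentially immediate from the lemma: for $x\in\compmod$ and $y\in\calO$, combining symmetry of the form with Lemma~\ref{lem:bilformsareequal} applied to the pair $(y,x)$ gives
\[
(\rho(\overline{\iota}(x)),\rho(\iota(y)))=(\rho(\iota(y)),\rho(\overline{\iota}(x)))=\Tr_{K/\bbQ}(yx)\in\bbZ,
\]
which is exactly the polarity condition for $\rho(\overline{\iota}(x))$.

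For the reverse inclusion I extend $\rho\circ\overline{\iota}$ by $\bbR$-linearity. By Lemma~\ref{lem:bilformsareequal}, for any $\bbQ$-basis $e_1,\ldots,e_d$ of $K$ the matrix of values of~(\ref{eq:skalprodaufRn}) on the vectors $\rho(\iota(e_i))$ and $\rho(\overline{\iota}(e_j))$ coincides with the non-degenerate Gram matrix $[\Tr_{K/\bbQ}(e_ie_j)]_{ij}$ of the trace form; hence $\rho\circ\overline{\iota}$ sends a $\bbQ$-basis of $K$ to an $\bbR$-basis of $\bbR^d$, and therefore extends uniquely to an $\bbR$-linear isomorphism $\Phi\colon K_\infty\to\bbR^d$. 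Given $z\in\rho(\iota(\calO))^\star$, I write $z=\Phi(w)$ with $w\in K_\infty$; extending Lemma~\ref{lem:bilformsareequal} by $\bbR$-linearity in the first argument turns the polarity condition into $\widetilde{T}(w,y)\in\bbZ$ for every $y\in\calO$, where $\widetilde{T}\colon K_\infty\times K\to\bbR$, $(w,y)\mapsto\widetilde{\Tr}(wy)$, is the natural extension of the trace pairing.

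To conclude, I take the $\bbZ$-basis $e_1^\vee,\ldots,e_d^\vee$ of $\compmod$ trace-dual to $e_1,\ldots,e_d$, whose existence is a standard property of the complementary module~\cite[Ch.\,III,\,\textsection\,2]{neukirch}. Writing $w=\sum_{i=1}^d c_i e_i^\vee$ with $c_i\in\bbR$, the duality relation $\widetilde{T}(w,e_j)=c_j$ forces each $c_j\in\bbZ$, so $w\in\compmod$ and $z\in\rho(\overline{\iota}(\compmod))$. The principal obstacle is precisely this last step of the reverse inclusion: one must argue that any $w\in K_\infty$ satisfying the polarity condition against all of $\calO$ already lies in the number field $K$ (in fact in $\compmod$), and this is exactly what the existence of the trace-dual $\bbZ$-basis of $\compmod$ delivers. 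The easy direction, by contrast, is essentially a direct transcription of Lemma~\ref{lem:bilformsareequal}.
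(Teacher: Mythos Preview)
Your proof is correct and supplies the details that the paper leaves implicit: the corollary is stated there without proof, as an immediate consequence of Lemma~\ref{lem:bilformsareequal} together with the definition of $\compmod$ and the fact that $\rho\circ\iota$ and $\rho\circ\overline{\iota}$ send $K$ onto full-rank lattices in $\bbR^d$. Your argument is precisely the natural elaboration of this.

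One small point worth tightening: for the trace-dual $e_1^\vee,\ldots,e_d^\vee$ to be a $\bbZ$-basis of $\compmod$ (rather than merely a $\bbQ$-basis of $K$), you need $e_1,\ldots,e_d$ to be an integral basis of $\calO$, not an arbitrary $\bbQ$-basis of $K$ as you wrote when setting up $\Phi$. Choosing an integral basis from the outset also guarantees that testing the polarity condition against the $e_j$ alone already captures the condition for all $y\in\calO$. With that adjustment the argument goes through verbatim.
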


The scalar product $(\,\cdot\,,\,\cdot\,)$ on $\bbR^{nd}$ is also defined 
as the sum of the components of each copy of $\bbR^d$.
Notice that we get the standard scalar product at the real places and 
the real scalar product multiplied by $2$ at the complex places.
By direct consequence of Lemma~\ref{lem:dimensionn} and Corollary~\ref{cor:polariscompl}
and again \cite[V\,§\,2,\,Thm.\,2]{weilbasicnumber}, cf.\ (\ref{eq:idealasintersection}),
this leads to the following generalisation.

\begin{cor}\label{cor:scalarproductindimnd}
For any algebraic number field $K$ with ring of integers $\calO$ and embeddings $\rho^n$ and $\iota^n$ as above, we have
\begin{align*}
\rho^n(\iota^n(A^{-1}\calO^n))^\star&=\rho^n(\overline{\iota}^n(A^t(\compmod)^n))\\
\shortintertext{and}
\rho^n\Bigl(\iota^n\Bigl(\bigcap_{v\nmid\infty}(A_v^{-1}\calO_v^n\cap K^n)\Bigr)\Bigr)^\star
&=\rho^n\Bigl(\overline{\iota}^n\Bigl(\bigcap_{v\nmid\infty}(A_v^t(\compmod_v)^n\cap K^n)\Bigr)\Bigr)
\end{align*}
for $A\in\GL_n(K)$, $A_v\in\GL_n(K_v)$ for all $n\in\bbN$.
\end{cor}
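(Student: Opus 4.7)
The plan is to reduce both identities to the $T_n$-polar relation on $K^n$ provided by Lemma~\ref{lem:dimensionn} via a rank-$n$ analogue of Lemma~\ref{lem:bilformsareequal}. As a preparatory step, summing Lemma~\ref{lem:bilformsareequal} coordinatewise — both $T_n$ and the scalar product on $\bbR^{nd}=(\bbR^d)^n$ decompose as sums over the $n$ coordinates — one obtains
\[ T_n(x,y) \;=\; \bigl(\rho^n(\iota^n(x)),\,\rho^n(\overline{\iota}^n(y))\bigr) \qquad \text{for all } x,y\in K^n, \]
which is the bridge between the $T_n$-polar relation on $K^n$ and the lattice polar on $\bbR^{nd}$.

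For the first identity, Lemma~\ref{lem:dimensionn} applied with $A^{-1}$ in place of $A$ gives $\prescript{\star}{}{(A^{-1}\calO^n)}=A^t(\compmod)^n$, and the displayed identity immediately promotes this to the inclusion $\rho^n(\overline{\iota}^n(A^t(\compmod)^n))\subseteq\rho^n(\iota^n(A^{-1}\calO^n))^\star$. For the reverse inclusion let $z\in\rho^n(\iota^n(A^{-1}\calO^n))^\star$ and set $V\coloneqq\rho^n(\iota^n(K^n))$, $W\coloneqq\rho^n(\overline{\iota}^n(K^n))$. Since $\rho^n(\iota^n(A^{-1}\calO^n))$ already $\bbQ$-spans $V$, the functional $(z,\,\cdot\,)$ takes $\bbQ$-values on $V$; the displayed identity exhibits $W$ as the $\bbQ$-linear dual of $V$ under $(\,\cdot\,,\,\cdot\,)$ (because $T_n$ is non-degenerate on $K^n$), so there is a unique $\tilde z=\rho^n(\overline{\iota}^n(y))\in W$, $y\in K^n$, agreeing with $z$ on $V$. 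Density of $V$ in $\bbR^{nd}$, continuity of $(\,\cdot\,,\,\cdot\,)$, and its non-degeneracy on $\bbR^{nd}$ together force $z=\tilde z\in W$, and the polar condition for $z$ then translates via the displayed identity and Lemma~\ref{lem:dimensionn} into $y\in A^t(\compmod)^n$.

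For the second identity the same strategy applies place by place. By (\ref{eq:idealasintersection}) applied coordinatewise and the hypothesis that $A_v=I$ for almost all $v$, the module $\bigcap_{v\nmid\infty}(A_v^{-1}\calO_v^n\cap K^n)$ is a fractional $\calO$-submodule of $K^n$; the local form of Lemma~\ref{lem:dimensionn} at each finite $v$ together with the intersection principle of Lemma~\ref{lem:caloastalsschnitt} identifies its $T_n$-polar as $\bigcap_{v\nmid\infty}(A_v^t(\compmod_v)^n\cap K^n)$, and the bridge from paragraph~1 delivers the claim. The main obstacle, in both identities, is precisely the reverse inclusion — upgrading the formal consequence of Lemma~\ref{lem:dimensionn} to an equality of lattices in $\bbR^{nd}$ — and it is uniformly resolved by the density-plus-nondegeneracy argument above, which hinges on the rationality of $T_n$ on $K^n\times K^n$.
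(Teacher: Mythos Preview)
Your proof is correct and follows the same route the paper sketches: establish the rank-$n$ version of Lemma~\ref{lem:bilformsareequal} as a bridge, then feed Lemma~\ref{lem:dimensionn} (and, for the second identity, the local--global intersection principle \`a la Lemma~\ref{lem:caloastalsschnitt} and~(\ref{eq:idealasintersection})) through it. The paper records no argument beyond ``direct consequence of Lemma~\ref{lem:dimensionn}, Corollary~\ref{cor:polariscompl} and Weil's theorem'', so you have in fact supplied the details it omits.

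One remark on economy: your density-plus-nondegeneracy manoeuvre for the reverse inclusion is valid but heavier than necessary. Since $A^{-1}\calO^n$ is free of rank $nd$ over $\bbZ$, choose a $\bbZ$-basis $e_1,\ldots,e_{nd}$; its $T_n$-dual basis $e_1^*,\ldots,e_{nd}^*$ is then a $\bbZ$-basis of $A^t(\compmod)^n$ by Lemma~\ref{lem:dimensionn}, and the bridge identity gives $\bigl(\rho^n(\iota^n(e_i)),\rho^n(\overline{\iota}^n(e_j^*))\bigr)=\delta_{ij}$. Thus $\rho^n(\overline{\iota}^n(e_j^*))$ is literally the dual basis of $\rho^n(\iota^n(e_i))$ with respect to $(\,\cdot\,,\,\cdot\,)$, which immediately yields both inclusions at once. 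This is presumably what the paper has in mind by ``direct consequence'', and the same dual-basis trick handles the second identity once you know (as you note) that $\bigcap_v(A_v^{-1}\calO_v^n\cap K^n)$ is a full $\calO$-lattice whose $T_n$-dual is computed locally.
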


\begin{rem}
Consider a finite number of $0$-symmetric convex bodies $S_i\subset\bbR^{m_i}$. 
Then, using the classical notion of polarity,
\begin{equation}\label{eq:prodofkomplcontainscomplofprod}
\bigl(\prod_i S_i\bigr)^\star \subseteq \prod_i S_i^\star\,.
\end{equation}
Indeed, let $x\in\bigl(\prod_i S_i\bigr)^\star$, then $\left<x,y\right>\leq 1$ 
for all $y\in \prod_i S_i$. So especially for any $i$ we have
$\left<x,(0,\ldots,0,y_i,0,\ldots,0)\right>\leq 1$ for all $y_i\in S_i$.
But that implies $\left<x_i,y_i\right>\leq 1$ for all $i$, 
which defines the right-hand side of (\ref{eq:prodofkomplcontainscomplofprod}).

For the scalar product $(\,\cdot\,,\,\cdot\,)$ instead of $\left\langle \,\cdot\,,\,\cdot\,\right\rangle$
we get 
$\left<x,(0,\ldots,0,y_i,0,\ldots,0)\right>\leq \tfrac{1}{2}$ and
$\left<x_i,y_i\right>\leq \tfrac{1}{2}$ for the complex places ($x_i,y_i\in\bbC$), 
so (\ref{eq:prodofkomplcontainscomplofprod}) holds as well.
\end{rem}

Due to Corollary~\ref{cor:scalarproductindimnd}, we are now in the situation to define our notion of adelic polarity.

\begin{defn}\label{def:adelicpolarbody}
Let $S = \prod_{v\nmid\infty} A_v^{-1}\calO_v^n \times \prod_{v\mid\infty} S_v$ be an adelic convex body.
The \emph{polar adelic body} of $S$ is
\[S^\star\coloneqq\prod_{v\nmid\infty} A_v^{t} (\compmod_v)^n \times\prod_{v\mid\infty} S_v^\star\,,\]
where $S_v^\star$ is the polar body of $S_v$ with respect to the restriction of (\ref{eq:skalprodaufRn}).
Since $\calO_v=\compmod_v$ for almost all $v\nmid\infty$ by Lemma~\ref{lem:caloastalsschnitt}, 
$S^\star$ is again an adelic convex body.
\end{defn}

\section{Adelic Transference Theorems}\label{sec:mainresults}

We now apply the results of the previous section, 
especially Corollary~\ref{cor:scalarproductindimnd}, 
to prove the main results of this paper.

\begin{proof}[Proof of Theorem~\ref{thm:adelicpolarupper}]
Let
\[\frakM=\bigcap_{v\nmid\infty} \bigl(A_v^{-1} \calO_v^n \cap K^n\bigr)
\quad\text{and}\quad
\frakM^\star=\bigcap_{v\nmid\infty} \bigl(A_v^{t} (\compmod_v)^n \cap K^n\bigr)\,.\]
By \cite[Lemma]{thunderremarksonadelic} $\rho(\iota(\frakM))$ 
and $\rho(\overline{\iota}(\frakM^\star))$ are lattices of full rank in $\bbR^{nd}$.
By Corollary~\ref{cor:scalarproductindimnd}, they are polar to each other.

Denote by $S_\infty$ and $S_\infty^\star$ the infinite parts of
$S$ and $S^\star$ respectively. 
By (\ref{eq:prodofkomplcontainscomplofprod}) we have
\begin{equation}\label{eq:dualindual}
(\rho(S_\infty))^\star\subset \rho(S_\infty^\star)\,.
\end{equation}

Denote by $\lambda_\ell(S)$ and $\lambda_\ell(S^\star)$ the adelic successive minima 
of $S$ and $S^\star$ respectively
and by $\widehat{\lambda}_i(T,\Lambda)$ the classical successive minima 
of the convex body $T$ and the lattice $\Lambda$ in $\bbR^{nd}$.
Then, by \cite[p.\,256]{thunderremarksonadelic}, for $\ell=1,\ldots,n$
\begin{align*}
\lambda_\ell(S) &\leq 
\widehat{\lambda}_{(\ell-1)d+1}\bigl(\rho(S_\infty),\rho(\iota(\frakM))\bigr)\\
\shortintertext{and}
\lambda_\ell(S^\star) &\leq \widehat{\lambda}_{(\ell-1)d+1}
\bigl(\rho(S_\infty^\star),\rho(\overline{\iota}(\frakM^\star))\bigr)
\leq \widehat{\lambda}_{(\ell-1)d+1}
\bigl(\rho(S_\infty)^\star,\rho(\overline{\iota}(\frakM^\star))\bigr)\,,
\end{align*}
where the last inequality follows from (\ref{eq:dualindual}).

Finally, applying (\ref{eq:classicalinequality}), we conclude
\begin{align*}
\lambda_\ell(S)\lambda_{n-\ell+1}(S^\star)&\leq 
\widehat{\lambda}_{(\ell-1)d+1}(\rho(S_\infty),\rho(\iota(\frakM)))
\widehat{\lambda}_{((n-\ell+1)-1)d+1}(\rho(S_\infty)^\star,\rho(\overline{\iota}(\frakM^\star)))\\
&\leq \widehat{\lambda}_{(\ell-1)d+1}(\rho(S_\infty),\rho(\iota(\frakM)))
\widehat{\lambda}_{(n-\ell)d+d}(\rho(S_\infty)^\star,\rho(\overline{\iota}(\frakM^\star)))\\
&\leq (nd)^{3/2} \,.\qedhere
\end{align*}
\end{proof}

\begin{cor}
Let $K$, $S$, $S^\star$ and $\lambda_1(S)$ be as in Theorem~\ref{thm:adelicpolarupper}
and let $\mu(S^\star)$ be the inhomogeneous minimum of $S^\star$. Then
\[\lambda_1(S) \cdot \mu(S^\star) \leq C\, nd(1+\log nd)\,,\]
where $C$ is a universal constant.
\end{cor}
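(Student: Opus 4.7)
The plan is to mirror the proof of Theorem~\ref{thm:adelicpolarupper}, replacing the classical transference inequality~\eqref{eq:classicalinequality} between successive minima of polar pairs by the classical transference bound between the first successive minimum and the inhomogeneous minimum of the polar lattice. The required classical input, due to Banaszczyk, states that for any $0$-symmetric convex body $T$ and any full-rank lattice $\Lambda$ in $\bbR^m$,
\[\widehat{\lambda}_1(T,\Lambda)\cdot\widehat{\mu}(T^\star,\Lambda^\star)\leq C\cdot m(1+\log m),\]
where $C$ is a universal constant (cf.~\cite{banaszcyknewbounds}).

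Following the setup of the proof of Theorem~\ref{thm:adelicpolarupper}, I would let $\frakM$ and $\frakM^\star$ be the $K$-rational lattices attached to the finite parts of $S$ and $S^\star$, and apply the above inequality in $\bbR^{nd}$ to the pair $T=\rho(S_\infty)$ and $\Lambda=\rho(\iota(\frakM))$. By Corollary~\ref{cor:scalarproductindimnd} the polar lattice is $\Lambda^\star=\rho(\overline{\iota}(\frakM^\star))$, giving
\[\widehat{\lambda}_1(\rho(S_\infty),\rho(\iota(\frakM)))\cdot\widehat{\mu}(\rho(S_\infty)^\star,\rho(\overline{\iota}(\frakM^\star)))\leq C\cdot nd(1+\log nd).\]

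It then remains to pass from the two classical quantities on the left to $\lambda_1(S)$ and $\mu(S^\star)$. The first factor already bounds $\lambda_1(S)$ from above, by the Thunder-type estimate used in the proof of Theorem~\ref{thm:adelicpolarupper} (with $\ell=1$). For the second factor, I would combine the containment~\eqref{eq:dualindual} $\rho(S_\infty)^\star\subseteq\rho(S_\infty^\star)$, which implies $\widehat{\mu}(\rho(S_\infty^\star),\,\cdot\,)\leq\widehat{\mu}(\rho(S_\infty)^\star,\,\cdot\,)$ (a larger body has smaller inhomogeneous minimum), with the adelic analogue of Definition~\ref{def:adelinhommin} expressed in the $\overline{\iota}$-convention appropriate for the polar body, exactly as $\lambda_\ell(S^\star)$ is treated in the proof of Theorem~\ref{thm:adelicpolarupper}.

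The main technical point, shared with the proof of Theorem~\ref{thm:adelicpolarupper}, is the consistent use of $\overline{\iota}$ (rather than $\iota$) for the quantities attached to the polar adelic body, which is required in order to match the polar lattice produced by Corollary~\ref{cor:scalarproductindimnd}; the isometry of~\eqref{eq:skalprodaufRn} that intertwines $\rho\circ\iota$ and $\rho\circ\overline{\iota}$ (negation of imaginary coordinates at complex places) is what makes this change of embedding harmless for the numerical value of $\widehat{\mu}$ when applied to both arguments simultaneously. Combining the estimates yields the desired bound $\lambda_1(S)\,\mu(S^\star)\leq C\cdot nd(1+\log nd)$.
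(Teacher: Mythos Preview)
Your proposal is correct and follows essentially the same route as the paper: reduce to the classical situation via $\rho\circ\iota$ and Corollary~\ref{cor:scalarproductindimnd}, use the containment~\eqref{eq:dualindual} to compare $\widehat{\mu}(\rho(S_\infty^\star),\,\cdot\,)$ with $\widehat{\mu}(\rho(S_\infty)^\star,\,\cdot\,)$, and then invoke Banaszczyk's transference bound between $\widehat{\lambda}_1$ and $\widehat{\mu}$ of the polar pair in $\bbR^{nd}$. Two minor remarks: the paper cites \cite[Corollary~1]{banaszcykinequalities2} rather than \cite{banaszcyknewbounds} for this particular inequality, and for $\ell=1$ the Thunder estimate is in fact an equality $\lambda_1(S)=\widehat{\lambda}_1(\rho(S_\infty),\rho(\iota(\frakM)))$, though the upper bound you state is of course sufficient.
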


\begin{proof}
As in the proof of Theorem~\ref{thm:adelicpolarupper},
we have $\lambda_1(S) = \widehat{\lambda}_1(\rho(\iota(\frakM)),\rho(S_\infty))$
and by (\ref{eq:dualindual}) we get
\[\widehat{\mu}(\rho(S_\infty^\star),\Lambda) \leq \widehat{\mu}(\rho(S_\infty)^\star,\Lambda)\]
for any lattice $\Lambda\subset\bbR^{nd}$.
Therefore
\[\lambda_1(S) \cdot \mu(S^\star) \leq \widehat{\lambda}_1(\rho(S_\infty),\rho(\iota(\frakM)))\cdot
 \widehat{\mu}(\rho(S_\infty)^\star,\rho(\overline{\iota}(\frakM^\star))) \leq C\, nd(1+\log nd)\,,
\]
by \cite[Corollary~1]{banaszcykinequalities2} with some universal constant $C$.
\end{proof}

\begin{proof}[Proof of Theorem~\ref{thm:adelicpolarlower}]
We use the standard bilinear form on $K^n$:
\[b(x,y)=\sum_{i=1}^n x_i \overline{y}_i\,,\]
where $\overline{\,\cdot\,}$ is complex conjugation if $K$ is a CM-field 
and the identity for $K$ totally real.
Let $u_1,\ldots,u_n$ and $v_1,\ldots,v_n$ be $K$-bases of $K^n$ such that
$u_i\in \lambda_i(S) S$ and $v_j\in \lambda_j(S^\star) S^\star$ for all $i,j$.
Notice that for $u_i\in\calO^n$ and $v_j\in(\compmod)^n$, we have
$b(A^{-1}u_j,\overline{A^{t}v_j})=b(u_j,\overline{v}_j)\in\compmod$,
using that $\compmod$ is a fractional ideal in $K$.
By definition of $\compmod$ and the different $\frakd$, we have
$\abs{x}\leq\abs{\frakd}^{-1}$ for $x\in\compmod$, \cite[III,\,2.1]{neukirch}.
This holds for any finite place $v$ as well.

Since $b$ is non-degenerate, there are
$i\in\Set{1,\ldots,\ell}$ and $j\in\Set{1,\ldots,n-\ell+1}$ such that $b(u_i,\overline{v}_j)\neq 0$.
Then by the product formula in (\ref{eq:productformulaetc})
\begin{align*}
1&=\prod_v \abs{b(u_i,\overline{v}_j)}_v^{d_v}\cdot
\left(\frac{\lambda_i(S)\lambda_j(S^\star)}{\lambda_i(S)\lambda_j(S^\star)}\right)^d\\
 &=\prod_{v\nmid\infty} \bigl|b(u_i,\overline{v}_j)\bigr|_v^{d_v}
 \cdot \left(\lambda_i(S)\lambda_j(S^\star)\right)^d
 \cdot\prod_{v\mid\infty} 
   \bigl|b(\tfrac{1}{\lambda_i(S)}u_i,\tfrac{1}{\lambda_j(S^\star)}\overline{v}_j)\bigr|_v^{d_v}\,.
\end{align*}
Now for any finite $v$ we have $b(u_i,\overline{v}_j)\in\compmod_v$, therefore $\bigl|b(u_i,\overline{v}_j)\bigr|_v^{d_v}\leq \abs{\frakd_v}^{-d_v}$,
where $\frakd_v$ denotes the local different. 
Finally $\prod_{v\nmid\infty}\abs{\frakd_v}^{-d_v}=\abs{\Delta_K}$, cf.~\cite[Ch.\,VI,\,§\,8]{knappadvancedalgebra}.

To conclude the proof, we consider the factors at the infinite places.
By assumption they are either all real or all complex. Fix some $v\mid\infty$.
Let $x\coloneqq\tfrac{1}{\lambda_i(S)}u_i$ and $y\coloneqq\tfrac{1}{\lambda_j(S^\star)}v_j$.
If $K$ is totally real, i.e.\ $v$ is real, we have
\[\bigl|b(x,\overline{y})\bigr|_v^{d_v}=\bigl|b(x,y)\bigr|_v^1
=\bigl|\sigma_v\bigl(\tsum_i x_i y_i \bigr)\bigr|
=\bigl|\tsum_i \sigma_v(x_i) \sigma_v(y_i) \bigr|
\leq 1\,,\]
by definition of $S_v^\star$.

If $K$ is a CM-field, i.e.\ $v$ is complex, we get
\[\begin{multlined}
\bigl|b(x,\overline{y})\bigr|_v^{d_v}
=\bigl|\sigma_v\bigl(\tsum_i x_i \overline{y}_i \bigr)\bigr|^2
=\bigl|\tsum_i \sigma_v(x_i) \overline{\sigma_v(y_i)} \bigr|^2\\
\leq \bigl(\bigl|\Rea(\tsum_i \sigma_v(x_i) \overline{\sigma_v(y_i)})\bigr| 
+ \bigl|\im\Ima(\tsum_i \sigma_v(x_i) \overline{\sigma_v(y_i)})\bigr|\bigr)^2
\leq \left(\left|\tfrac{1}{2}\right|+1\left|\tfrac{1}{2}\right|\right)^2=1\,,\end{multlined}\]
by definition of $S_v^\star$, since $\im\Ima(x)=\im\Rea(\im x)$ for all $x\in\bbC$
and from $(\sigma_v(x_i))_i\in S_v$ we get  $\im(\sigma_v(x_i))_i\in S_v$ by our additional requirement.

The conclusion follows from the monotonicity of the minima.
\end{proof}

\begin{exmp}\label{exmp:bbQsqrt2}
Let $n=1$ and $K=\bbQ[\sqrt{2}]$, then $\calO=\bbZ[\sqrt{2}]=\bbZ+\sqrt{2}\bbZ$ 
and the field discriminant is $\abs{\Delta_K}=8$.
Consider $x=a+b\sqrt{2}\in\bbQ[\sqrt{2}]$ and $y=c+d\sqrt{2}\in\bbZ[\sqrt{2}]$.
Then
\[xy=(a+b\sqrt{2})(c+d\sqrt{2})=ac+2bd+(ad+bc)\sqrt{2}\,.\]
Therefore
\[\Tr(xy)=\Tr\begin{pmatrix}ac+2bd&2ad+2bc\\ ad+bc&ac+2bd\end{pmatrix}=2ac+4bd\]
and this is an integer if $a\in\tfrac{1}{2}\bbZ$ and $b\in\tfrac{1}{4}\bbZ$.
Therefore $\compmod=\tfrac{1}{2}\bbZ+\tfrac{\sqrt{2}}{4}\bbZ$.

Now $\rho(\iota(\calO)),\rho(\iota(\compmod))\subset\bbR^2$ are lattices of rank $2$,
more precisely
\[\rho(\iota(\calO))=\begin{pmatrix}1&\sqrt{2}\\1&-\sqrt{2}\end{pmatrix}\bbZ^2\,,\qquad
\rho(\iota(\compmod))=\begin{pmatrix}\tfrac{1}{2}&\tfrac{\sqrt{2}}{4}\\\tfrac{1}{2}&-\tfrac{\sqrt{2}}{4}\end{pmatrix}\bbZ^2\,,\]
and we see that $\rho(\iota(\calO))^\star=\rho(\iota(\compmod))$.
This follows easily from the fact, that the matrices are the inverse transpose of one another.

Taking the $1$-dimensional unit ball $[-1,1]$ at both infinite places for the convex bodies,
we see that
\[S=\prod_{v\nmid\infty}\calO_v\times\prod_{v\mid\infty}[-1,1]\quad\text{and}\quad
S^\star=\prod_{v\nmid\infty}\compmod_v\times\prod_{v\mid\infty}[-1,1]
\]
are polar. Obviously $\lambda_1(S)\leq 1$ and since $\tfrac{\sqrt{2}}{4}<\tfrac{1}{2}$,
we have $\lambda_1(S^\star)\leq\tfrac{\sqrt{2}}{4}$.
This gives equality for the lower bound in Theorem~\ref{thm:adelicpolarlower}.
\end{exmp}

\medskip
\noindent{\itshape Acknowledgment.} I would like to thank Martin Henk, 
Florian Heß, Matthias Henze, Jörg Jahnel and Kristin Stroth for helpful comments 
and discussions on the subject.

\bibliographystyle{amsplain}
\bibliography{polarbody-paper}
\end{document}